\documentclass[reqno,12pt]{amsart}

\usepackage{epsf}
\usepackage{graphics}
\usepackage{amssymb}
\usepackage{amsmath}

\date{}

\theoremstyle{plain}
\newtheorem{theorem}{Theorem}
\newtheorem{corollary}{Corollary}

\newtheorem{proposition}{Proposition}

\theoremstyle{definition}
\newtheorem{definition}{Definition}

\theoremstyle{remark}
\newtheorem*{example}{Example}

\def\C{{\mathbb C}}
\def\N{{\mathbb N}}

\def\R{{\mathbb R}}

\title{Bipartite graphs and combinatorial adjacency} 

\author{Sebastian Baader}

\begin{document}

\begin{abstract} We present a simple combinatorial model for quasipositive surfaces and positive braids, based on embedded bipartite graphs. As a first application, we extend the well-known duality on standard diagrams of torus links to twisted torus links. We then introduce a combinatorial notion of adjacency for bipartite graph links and discuss its potential relation with the adjacency problem for plane curve singularities.
\end{abstract}

\maketitle

\section{Introduction}

The main symmetry of torus links, $T(p,q)=T(q,p)$, is a trivial geometric fact. However, it is hardly visible on the level of standard diagrams; the braids $(\sigma_1 \sigma_2 \ldots \sigma_{p-1})^q$ and $(\sigma_1 \sigma_2 \ldots \sigma_{q-1})^p$ do not even have the same number of crossings. We propose a diagram of the fibre surface of torus links that exhibits the symmetry of the parameters $p$ and $q$. The following description is motivated by A'Campo's new t\^ete-\`a-t\^ete vision of the monodromy of isolated plane curve singularities~\cite{AC}. It relies on the fact that the fibre surface of the torus link $T(p,q)$ retracts on a complete bipartite graph of type $\theta_{p,q}$~\cite{Ph}. This fact can be seen explicitly in Figure~1, where the fibre surface of the torus knot $T(3,4)$ is drawn as a union of $12$ ribbons along the edges of the graph $\theta_{3,4}$ embedded in $\R^3$. We will shortly explain this in detail. 

\begin{figure}[ht]
\scalebox{0.7}{\raisebox{-0pt}{$\vcenter{\hbox{\epsffile{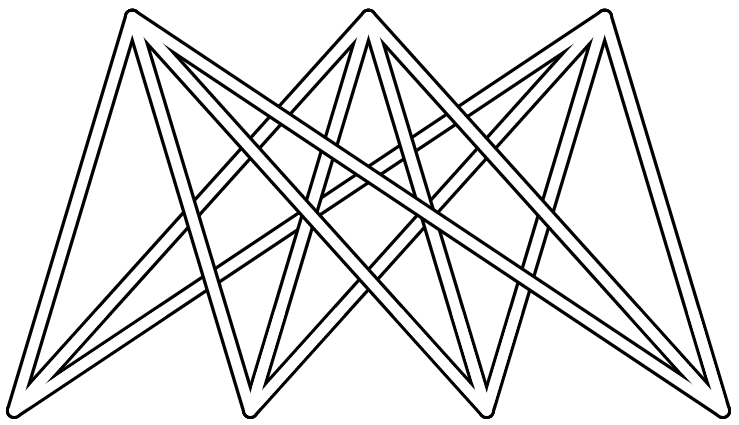}}}$}} 
\caption{}
\end{figure}

Ribbon diagrams offer a lot of possibilities to perform cobordisms by cutting ribbons. For example, the three cuts shown in Figure~2 result in a disjoint union of two trefoil knots. It turns out that many ribbon cuts correspond to smoothings of certain crossing in the standard diagrams of torus links.
This suggests to look at links associated with subgraphs of the complete bipartite graphs $\theta_{p,q} \subset \R^3$. Let us call these bipartite graph links. 

\begin{figure}[ht]
\scalebox{0.7}{\raisebox{-0pt}{$\vcenter{\hbox{\epsffile{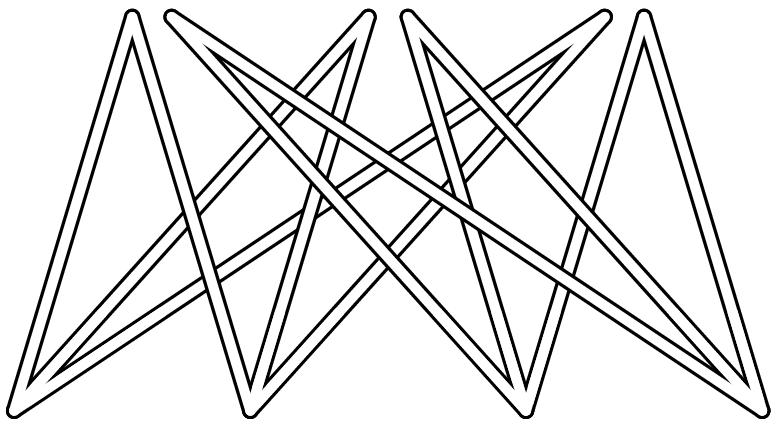}}}$}} 
\caption{}
\end{figure}

\begin{theorem} The family of bipartite graph links coincides with the family of strongly quasipositive links.
\end{theorem}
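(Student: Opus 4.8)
The plan is to prove the two inclusions separately, working throughout with the spanning \emph{surfaces} rather than with link types. Recall Rudolph's normal form for strongly quasipositive links: a link is strongly quasipositive precisely when it bounds a Bennequin surface $B(\beta)$ associated to a word $\beta=\sigma_{i_1,j_1}\cdots\sigma_{i_\ell,j_\ell}$ in the band generators $\sigma_{i,j}$, $i<j$; here $B(\beta)$ consists of $n$ parallel Seifert disks $D_1,\dots,D_n$ together with $\ell$ bands, the $k$-th joining $D_{i_k}$ to $D_{j_k}$, passing behind the disks in between, and carrying one positive half-twist. Thus the theorem amounts to matching the ribbon surface $\Sigma_\Gamma$ of a subgraph $\Gamma\subseteq\theta_{p,q}$ against such Bennequin surfaces, and the argument hinges on a single dictionary: for the standard embedding of $\theta_{p,q}$ in $\R^3$ the ribbon surface $\Sigma_{p,q}$ of the full graph is a Bennequin surface in which the $p+q$ Seifert disks are the vertex-disks (all disks of one colour class first, all of the other afterwards) and the $pq$ bands are the edge-ribbons, with $a_ib_j$ corresponding to $\sigma_{i,p+j}$. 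I would establish this by inspecting the picture behind Figure~1, checking that each edge-ribbon indeed runs behind the intervening disks with one positive half-twist; the fact from \cite{Ph} that $\Sigma_{p,q}$ is the fibre surface of $T(p,q)$, together with the bookkeeping identity $\chi(\Sigma_\Gamma)=\chi(\Gamma)$, serves as a consistency check.

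Granting this dictionary, the inclusion that bipartite graph links are strongly quasipositive is immediate. A subgraph $\Gamma$ is obtained from $\theta_{p,q}$ by deleting some vertices together with all their incident edges, so $\Sigma_\Gamma$ is obtained from the Bennequin surface $\Sigma_{p,q}$ by deleting the corresponding bands (and discarding any disk left without bands, which would only contribute an unknot component). A Bennequin surface with some bands deleted is again a Bennequin surface, namely of the subword obtained by erasing the corresponding letters; hence $\Sigma_\Gamma$ is quasipositive and $L_\Gamma=\partial\Sigma_\Gamma$ is strongly quasipositive.

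For the converse I would start from a strongly quasipositive link $L=\partial B(\beta)$ and modify $B(\beta)$, without changing its boundary, into the ribbon surface of an embedded bipartite graph. The only obstructions to $B(\beta)$ already having this form are that its band graph --- vertices the disks, edges the bands --- may fail to be bipartite and may carry repeated edges. Both are removed by local modifications supported near one disk that leave $\partial B(\beta)$ unchanged while subdividing an edge of the band graph by a new disk: one inserts an auxiliary Seifert disk $D^*$ next to $D_i$ and reroutes a band joining $D_i$ to $D_j$ so that it instead runs from $D_i$ to $D^*$ and then from $D^*$ to $D_j$, which changes nothing up to isotopy since $D^*$ and the short connecting band slide back onto $D_i$. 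Performing enough such subdivisions kills every odd cycle of the band graph and, using a fresh $D^*$ each time, makes it simple; the result is a surface $B'$ with $\partial B'=L$ whose band graph is a simple bipartite graph. Two-colouring its disks $A\sqcup B$ and isotoping $B'$ so that the $A$-disks form one standard stack, the $B$-disks the other, and each band sits as the standard edge-ribbon between its endpoints, identifies $B'$ with $\Sigma_\Gamma$ for the subgraph $\Gamma\subseteq\theta_{p,q}$, $p=|A|$, $q=|B|$, recording which pairs of disks carry a band. Hence $L=L_\Gamma$.

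I expect the real difficulty to sit in the dictionary and in this last isotopy: one must check that the standard embedding of $\theta_{p,q}$ is ``universal'' enough that every band generator, with its positive half-twist and its routing behind the intermediate disks, can be placed on an edge-ribbon of a sufficiently large complete bipartite graph, and conversely that each edge-ribbon genuinely realises such a band; keeping the framings straight, and ensuring that the subdivision moves do not secretly introduce untwisted bands incompatible with the edge-ribbons of $\theta_{p,q}$, is where the A'Campo-style normal form of the surface must be handled with care. The purely combinatorial steps --- deleting bands, subdividing edges, $2$-colouring --- are then routine.
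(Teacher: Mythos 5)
Your two inclusions are set up sensibly, but the converse direction (strongly quasipositive $\Rightarrow$ bipartite graph link) follows a route that is both harder than necessary and leaves the essential step unverified. You try to make the \emph{band graph} of the Bennequin surface $B(\beta)$ — vertices $=$ disks, edges $=$ bands — bipartite by subdividing odd cycles, inserting an auxiliary disk $D^*$ and rerouting a band $D_i\to D_j$ through it. Whether such a subdivision preserves the boundary link is precisely where the content lies: each band of a Bennequin surface carries a positive half-twist, so after splitting $b$ into $b_1\colon D_i\to D^*$ and $b_2\colon D^*\to D_j$ one must account for two half-twists where there used to be one, and ``sliding $D^*$ back onto $D_i$'' is not a naked isotopy of embedded surfaces but a positive (de)stabilization in the middle of the braid, which has to be justified by Rudolph's Markov theory for quasipositive surfaces. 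You flag this yourself, but it is the crux of the proof, not an afterthought.

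The paper sidesteps all of this by \emph{not} trying to make the band graph bipartite. Instead, each braid strand is shrunk to a single point on the upper line $U$, and \emph{each band of $B(\beta)$ becomes a fork with exactly two teeth}, contributing one new vertex of valency two on the lower line $L$. The resulting graph is bipartite for free: $U$-vertices correspond to strands, $L$-vertices correspond to bands, and every edge joins one of each. No subdivisions, no parity bookkeeping, no framing corrections. This is the key structural observation you are missing, and it collapses the hard direction to a picture (Figure~6 of the paper). For the easy direction (bipartite $\Rightarrow$ SQP) the paper also takes a slightly different route: rather than proving a ``dictionary'' identifying the ribbon surface of $\theta_{p,q}$ with a Bennequin surface on $p+q$ strands (a claim you state but do not establish, and which requires its own isotopy), it invokes the incompressible-subsurface characterization of quasipositive surfaces: the ribbon surface of any $\Gamma\subseteq\theta_{p,q}$ is an incompressible subsurface of the fibre surface of $T(p,q)$, hence quasipositive. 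That both avoids the dictionary and is the direction you correctly found routine.

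In short: direction one is fine in spirit but rests on an unproven dictionary that the paper does not need; direction two has a genuine gap, since the subdivision move's compatibility with framings and with the standard ribbon embedding is exactly the nontrivial point, and the paper's band-as-valency-two-vertex trick makes the whole problem disappear.
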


Bipartite graph links contain various well-studied classes of links, for example positive braids links and Lorenz links. We will single out these classes in terms of bipartite graphs. As an application, we obtain a curious duality on standard diagrams of twisted torus links. This is the content of Section~3.

Given two fixed natural numbers $p,q \geq 2$, we may ask which links can be obtained by cutting ribbons of the embedded complete bipartite graph $\theta_{p,q}$. This innocent question turns out to be a hard one. In fact, it may be related to the adjacency problem for plane curve singularities. We will offer perspectives on this in Section~4.

The last section is devoted to a notion of density that comes naturally with bipartite graph links. As we will see, links with a high density share at least one property with torus links: their signature invariant has a high defect from maximality.

\section*{Acknowledgements}
This manuscript was greatly influenced by various people. Special thanks go to Norbert A'Campo, Peter Feller, Christian Graf and Masaharu Ishikawa for their inspiring inputs.

\section{Ribbon diagrams for strongly quasipositive links}

We need precise definitions for bipartite graph links, quasipositive surfaces and strongly quasipositive links before proving Theorem~1.
Let $U,L \subset \R^3$ be two skew lines. We will fix $U=\{x=0,\, z=1\}$, $L=\{x=y,\, z=0\}$, for simplicity. Let $\Gamma \subset \R^3$ be a finite union of straight line segments, each one having one endpoint on $U$ and one on $L$. Thus $\Gamma$ is an embedded bipartite graph. The ribbon surface associated with $\Gamma$ is made up of ribbons, one for each edge of $\Gamma$, whose projections onto the $(y,z)$-plane are immersions (see Figures~1, 2 and~4 for an illustration).

\begin{definition}  A link in $\R^3$ is called a bipartite graph link, if it is the boundary of a ribbon surface in the above sense.
\end{definition}

We claim that the links associated with complete bipartite graphs are precisely torus links. In fact, given an embedded complete bipartite graph $\Gamma \subset \R^3$ with $p$ and $q$ vertices on $U$ and $L$, respectively, we may deform the corresponding link $L(\Gamma)$ into the standard braid diagram $(\sigma_1 \sigma_2 \ldots \sigma_{p-1})^q$ of $T(p,q)$. This deformation is performed in two steps.

\medskip
(1) Each vertex on the lower line $L$ is adjacent to precisely $p$ edges of $\Gamma$, the union of which we call a fork. Thus $\Gamma$ consists of $q$ forks that are piled in some sense. Stretching each of the vertices on $U$ into an interval allows us to separate all forks in the $(y,z)$-projection. This is shown on the  top left of Figure~3, for the case $p=3$, $q=4$.

\begin{figure}[ht]
\scalebox{0.7}{\raisebox{-0pt}{$\vcenter{\hbox{\epsffile{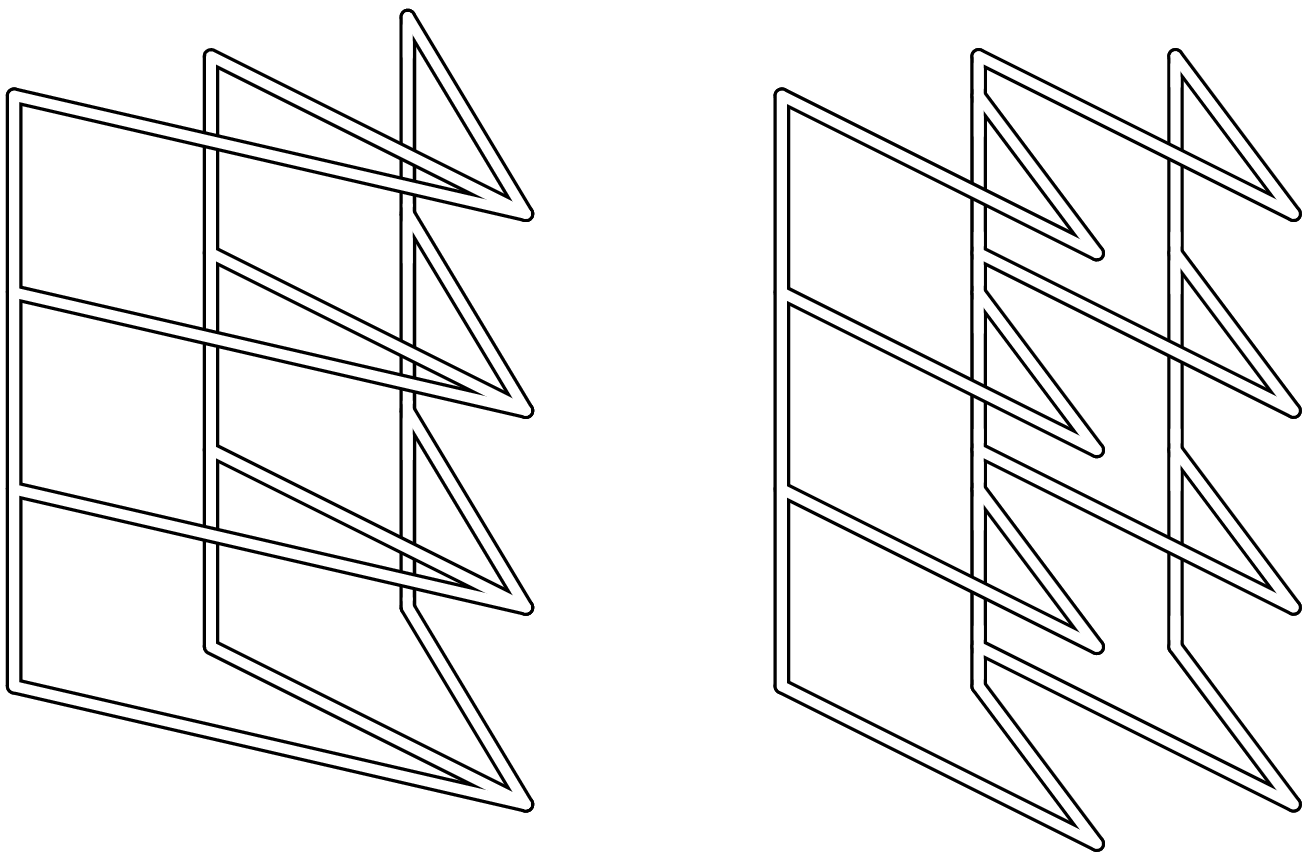}}}$}} 

\bigskip
\bigskip
\scalebox{0.7}{\raisebox{-0pt}{$\vcenter{\hbox{\epsffile{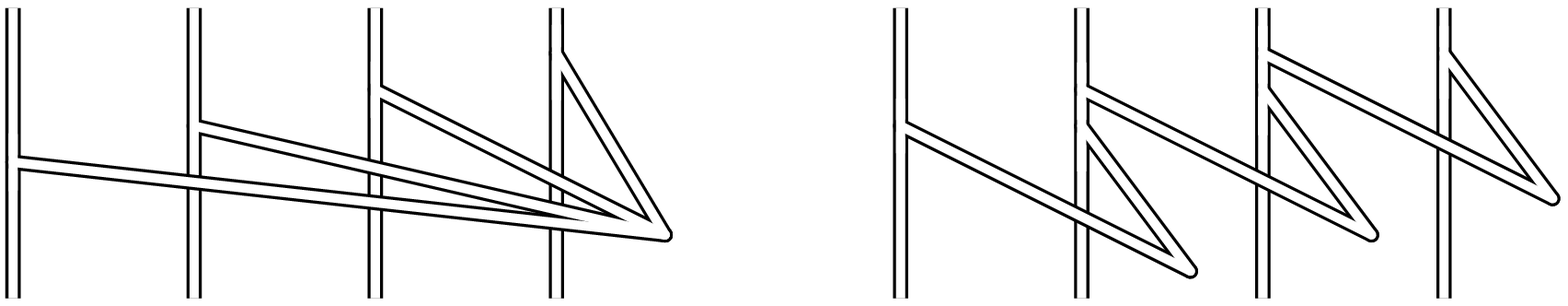}}}$}} 
\caption{}
\end{figure}

(2) Each fork can be split into $p-1$ forks with two teeth connecting a pair of stretched vertices, by a suitable isotopy. This is illustrated at the bottom of Figure~3, for a single fork with $4$ teeth. The resulting surface diagram is shown on the top right of Figure~3 and can easily be identified as the canonical Seifert surface associated with the braid $(\sigma_1 \sigma_2 \ldots \sigma_{p-1})^q$. 

A first naive and false guess is that bipartite graph links are positive braid links. The bipartite graph knot depicted in Figure~4 is obtained from the ribbon diagram of Figure~1 by $4$ ribbon cuts. It is isotopic to the non-fibred positive twist knot $5_2$, which is not a positive braid knot, since these are all fibred~\cite{St}. For the same reason, it is not possible to obtain the knot $5_2$ from any of the standard diagrams of the torus knot $T(3,4)$ by smoothing any number of crossings.

\begin{figure}[ht]
\scalebox{0.7}{\raisebox{-0pt}{$\vcenter{\hbox{\epsffile{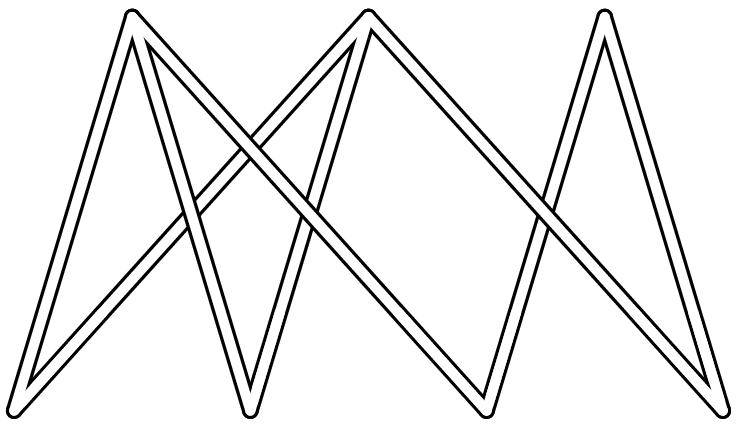}}}$}} 
\caption{}
\end{figure}

There is still a weaker notion of positivity inherited by bipartite graph links: strong quasipositivity. As many notions of positivity, strong quasipositivity was introduced by Lee Rudolph~\cite{Ru1}. The definition of strong quasipositivity involves certain surfaces, altough it is an attribute for links.

\begin{definition} An embedded compact surface $S \subset \R^3$ is called positive, if it is isotopic to an incompressible subsurface of the fibre surface of a positive torus link.
\end{definition}

Here incompressibility simply means that the inclusion of $S$ into the fibre surface induces an injective map on the level of fundamental groups. 

\begin{definition} A link in $\R^3$ is called strongly quasipositive, if it is the boundary of a quasipositive surface.
\end{definition}

Alternatively, quasipositive surfaces may be defined as Legendrian ribbons with respect to the standard contact strucure on $\R^3$~\cite{BI}. As the name suggests, strongly quasipositive links include positive links. However, this is a non-trivial fact, due to Rudolph~\cite{Ru2}.

Using either of the definitions, we immediately see that bipartite graph links are strongly quasipositive. Indeed, all ribbon surfaces constructed above are incompressible subsurfaces of the ribbon surface associated with an embedded complete bipartite graph. The latter are fibre surfaces of torus links, since they are isotopic to the canonical Seifert surfaces of these. 

In order to show the converse, we need yet another desription of strongly quasipositive links, which is in fact the original one~\cite{Ru1}.

\begin{definition} A link in $\R^3$ is strongly quasipositive, if it is the closure of a strongly quasipositive braid $\beta$ in some braid group $B_n$, i.e. a finite product of words of the form
$$\sigma_{i,j}=(\sigma_i \sigma_{i+1} \ldots \sigma_{j-2}) \sigma_{j-1} (\sigma_i \sigma_{i+1} \ldots \sigma_{j-2})^{-1},$$
for $1 \leq i<j \leq n-1$.
\end{definition}

For obvious reasons, we call the words $\sigma_{i,j}$ generalised positive crossings (see Figure~5). Strongly quasipositive links bound canonical Seifert surfaces made of one disc for each braid strand and one band for each generalised crossing.

\begin{figure}[ht]
\scalebox{1.0}{\raisebox{-0pt}{$\vcenter{\hbox{\epsffile{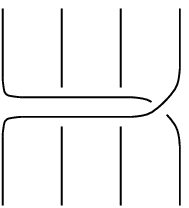}}}$}} 
\caption{}
\end{figure}

A canonical surface is shown at the bottom right of Figure~6, for the braid $\sigma_1 \sigma_{1,3} \sigma_3 \sigma_2 \sigma_3$. In that diagram all bands are represented by forks with two teeth. By looking at the whole sequence of diagrams of Figure~6, we realise that the ribbon surface of any embedded bipartite graph can be deformed into the canonical surface of a strongly quasipositive braid. Note that forks consisting of one edge give no contribution to the canonical surface diagram, since the correponding ribbons can be removed by an isotopy. Conversely, every canonical surface can be deformed into a bipartite graph surface by shrinking each braid strand to a point. In this way we obtain a bipartite graph where all points on the lower line $L$ have valency two. This proves Theorem~1.

\begin{figure}[ht]
\scalebox{0.7}{\raisebox{-0pt}{$\vcenter{\hbox{\epsffile{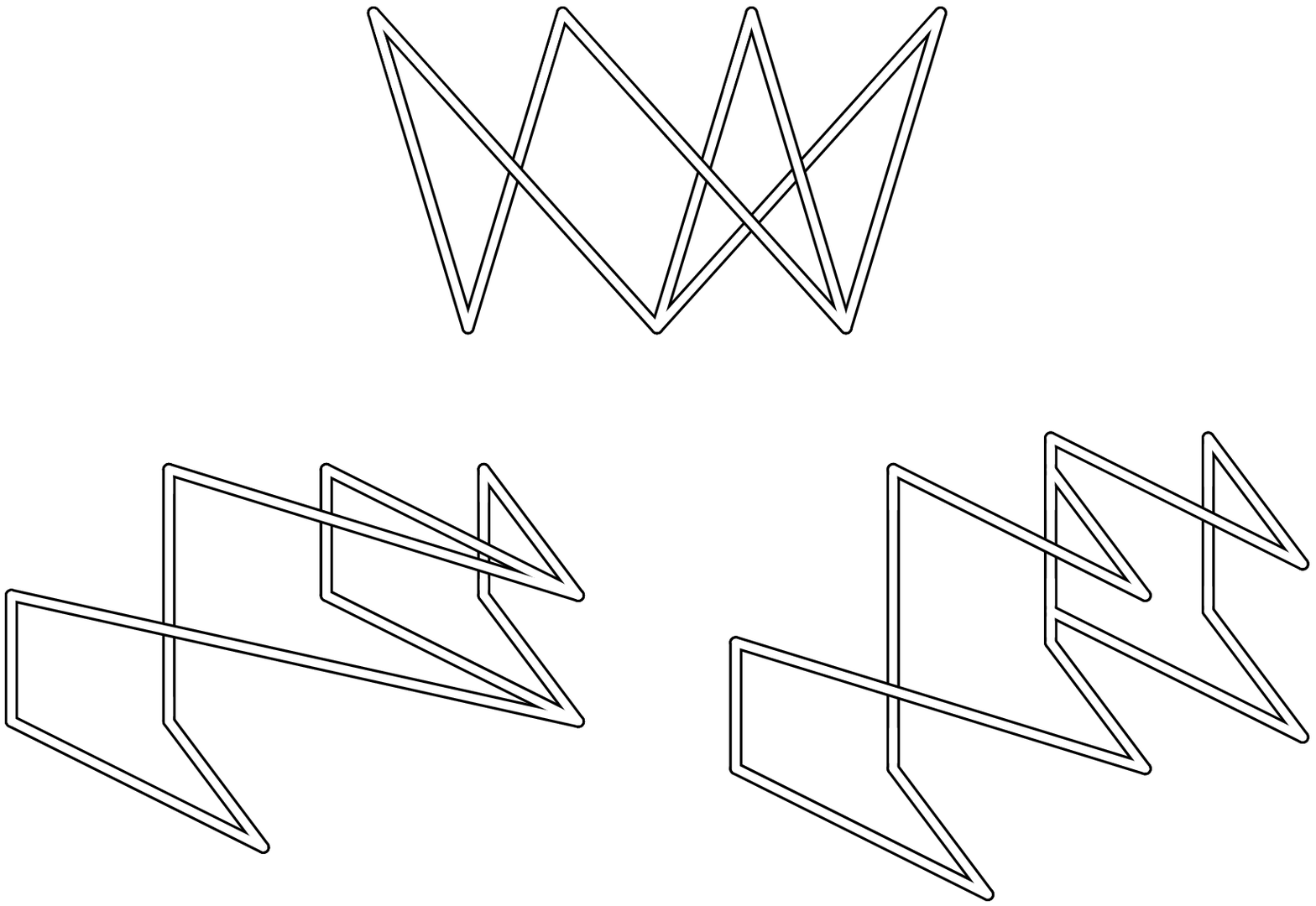}}}$}} 
\caption{}
\end{figure}

\section{Twisted torus links}

As we saw in the preceding section, there is a natural way of deforming the ribbon surface of an embedded bipartite graph $\Gamma$ into the canonical surface of a strongly quasipositive braid. The resulting braid is positive, if and only if all forks of $\Gamma$ are complete, meaning that their endpoints form sets of consecutive vertices of $\Gamma$ on the upper line (as in Figure~3). By turning an embedded bipartite graph $\Gamma$ upside down, we obtain another embedded bipartite graph $\widetilde{\Gamma}$ that gives rise to a different braid, in general. Note that the forks of $\widetilde{\Gamma}$ need not be complete, even if the ones of $\Gamma$ are. Therefore we are not able to extend the duality on torus link diagrams to positive braids. However, we may define a duality on the more restricted class of twisted torus link diagrams. By the work of Birman and Kofman~\cite{BK}, these represent Lorenz links. Twisted torus links admit a natural description in terms of bipartite graphs.

Let $a_1 \geq a_2 \geq \ldots \geq a_n$ be a finite decreasing sequence of natural numbers. We define an embedded bipartite graph $\Gamma(a_1, \ldots, a_n)$ with $a_1$ vertices on the upper line $U$ as a union of $n$ forks, where the $k$-th fork has the first $a_k$ points on $U$ as endpoints (see Figure~7 for an illustration). Braid diagrams associated with graphs of type $\Gamma(a_1, \ldots, a_n)$ are precisely twisted torus link diagrams. Turning $\Gamma(a_1, \ldots, a_n)$ upside down gives rise to a dual graph $\Gamma(b_1, \ldots, b_n)$ with $b_1=n$, $m=a_1$. In fact, thinking of the numbers $a_1, a_2, \ldots, a_n$ as the coefficients of a Young diagram, the numbers $b_1, b_2, \ldots, b_n$ correspond to the dual Young diagram. For example, the dual graph of $\Gamma(4,4,3,2,2)$ is $\Gamma(5,5,3,2)$. The same involution has been described by Birman and Kofman (\cite{BK}, Corollary~4) and, in terms of Lorenz links, by Dehornoy (\cite{De}, Proposition~1.18.). Within the framework of bipartite graphs, it is not hard to figure out a duality on an even larger class of diagrams.

\begin{figure}[ht]
\scalebox{1.0}{\raisebox{-0pt}{$\vcenter{\hbox{\epsffile{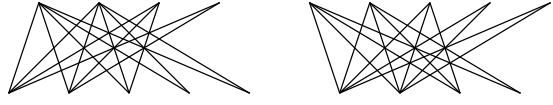}}}$}} 
\caption{$\Gamma(4,4,3,2,2)$ and $\Gamma(5,5,3,2)$}
\end{figure}

\section{Combinatorial adjacency}

Torus links are prototypes of links associated with isolated singularities of complex plane curves. Classically, the torus link $T(p,q)$ is defined as the intersection of the singular curve $\{z^p+w^q=0\}$ with the unit $3$-sphere $\{|z|^2+|w|^2=1\} \subset \C^2$. A generic deformation of that curve will transform it into a smooth one, e.g. $z^p+w^q+t$, $t \in [0,1]$. Carefully chosen deformations may give rise to simpler singularities, e.g. $z^p+w^q+t(z^a+w^b)$, $t \in [0,1]$, where $a \leq p$, $b \leq q$. After a suitable biholomorphic coordinate change around $0 \in \C^2$, the singularity $z^p+w^q+t(z^a+w^b)$ becomes $x^a+y^b$, provided $t>0$. 

Formally, a deformation of a singularity of $f \in \C[z,w]$ at $0 \in \C^2$ is a polynomial in three variables $H(t,z,w)$ with the following properties:
\begin{enumerate}
\item $H(0,z,w)=f(z,w)$;
\item for all $t \in [0,1]$, the restriction $H_t(z,w)=H(t,z,w)$ has an isolated singularity at $0 \in \C^2$;
\item for all $t \in (0,1]$, the singularity of $H_t$ at $0$ is equivalent to the singularity of $H_1$ at $0$ (via a local biholomorphic coordinate change).
\end{enumerate}

We say that the singularity of $H_1$ at $0$ is adjacent to the singularity of $H_0=f$, as well as their links. For more details on versal deformations and adjacency of singularities, we refer the reader to Siersma's dissertation~\cite{Si}. An explicit solution to the adjacency problem is not known, not even for singularities of type $z^p+w^q$. Let us note that the link $T(a,b)$ is adjacent to $T(p,q)$, provided $a \leq p$, $b \leq q$. However, this is not a necessary condition. For example, the links $T(2,n)$ are adjacent to $T(3,4)$, for all $n \leq 6$ (these are the only ones, apart from $T(3,3)$, for genus reasons). More generally, for $a,b,c \in \N$ with $c \leq a$, the function $H(t,z,w)=z^a+(w^b+tz)^c$ exhibits a deformation of the singularity $z^a+w^{bc}$ into $z^{ab}+w^c$. This surprisingly easy deformation was found by Peter Feller, based upon ideas of Ishikawa, Nguyen and Oka~\cite{ITO}. The verification requires basic knowledge about Newton polygons. Substituting $x=w^b+tz$ transforms $z^a+(w^b+tz)^c$ into $(\frac{x-w^b}{t})^a+x^c$; the latter is equivalent to $y^{ab}+x^c$, provided $c \leq a$.   

In this section we propose a combinatorial notion of adjacency for bipartite graph links, motivated by the above list of algebraic adjacencies. A complete bipartite graph of type $\theta_{p,q}$ consists of $p$ forks with $q$ teeth. Given a fork $f$ and two natural numbers $a_1, a_2 \geq 1$ with $a_1+a_2=q$, let us define a splitting of $f$ by assembling the first $a_1$ and the last $a_2$ endpoints of $f$ into two forks $f_1$, $f_2$, respectively (see Figure~8). On the level of braid diagrams, this is simply smoothing a crossing. We say that an embedded bipartite graph $\Gamma$ is adjacent to $\theta_{p,q}$, if it is obtained from $\theta_{p,q}$ by a finite number of fork splittings, where splittings may be applied to up- and down-pointing forks, possibly with iteration (see Figure~2).
The corresponding link $L(\Gamma)$ is called combinatorially adjacent to $T(p,q)$ 
\footnote{This definition arose from discussions with Peter Feller}.
As a first example, we note that the torus link $T(a,b)$ is combinatorially adjacent to $T(p,q)$, provided $a \leq p$, $b \leq q$ (this is most easily seen in two steps, via $T(a,q)$ resp. $T(p,b)$). A more interesting splitting is shown in Figure~8; it shows that $T(2,6)$ is combinatorially adjacent to $T(3,4)$. 

\begin{figure}[ht]
\scalebox{0.7}{\raisebox{-0pt}{$\vcenter{\hbox{\epsffile{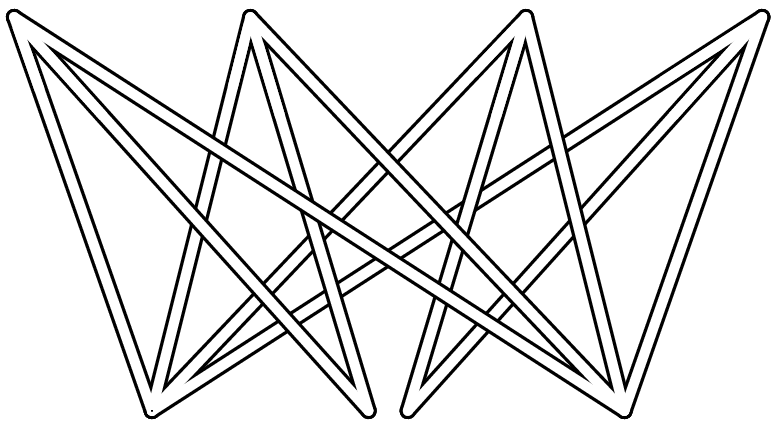}}}$}} 
\caption{}
\end{figure}

This is a special case of the following fact: for $a,b,c \in \N$, $c \leq a$, the torus link $T(ab,c)$ is obtained from $T(a,bc)$ by smoothing an appropriate set of crossings, $(b-1)(a-c)$ in number (see~\cite{Ba}, Proposition~1, and Figure~9 for $(a,b,c)=(3,2,7)$ and $(2,3,7)$). This is the combinatorial counterpart to the above statement on algebraic adjacencies. 

\begin{figure}[ht]
\scalebox{0.7}{\raisebox{-0pt}{$\vcenter{\hbox{\epsffile{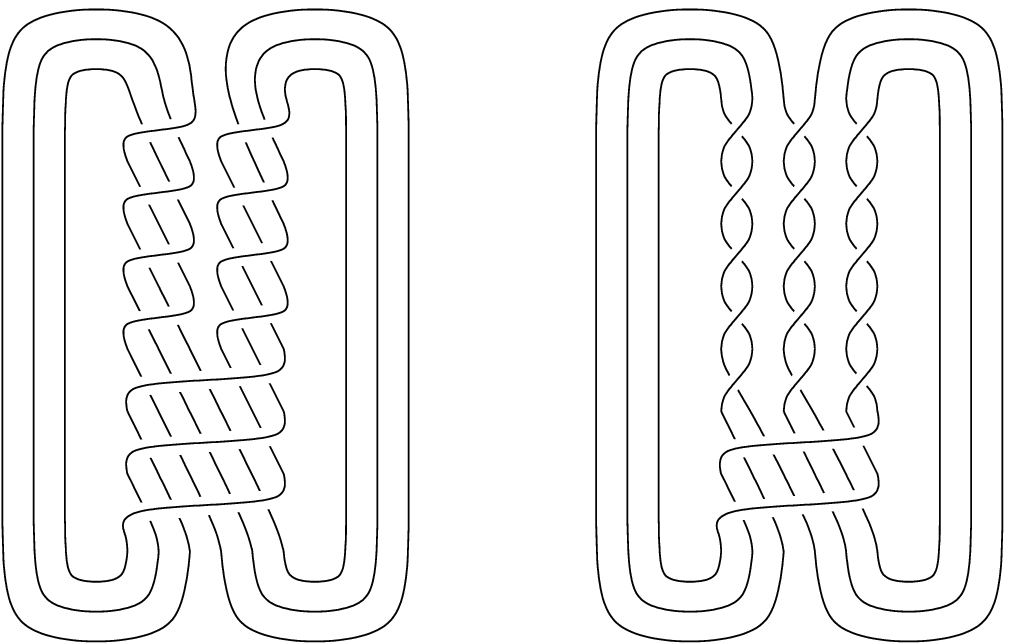}}}$}} 
\caption{}
\end{figure}

The exciting thing about combinatorial adjacency is that it allows smoothings of crossings in the two standard diagrams of torus links simultaneously. As we just observed, there is a big overlap between algebraic and combinatorial adjacency, where both notions apply. We do not know whether this is more than a pure coincidence. The following question should nevertheless be justified: for which $a,b,p,q \in \N$ is $T(a,b)$ combinatorially adjacent to $T(p,q)$?

\section{Density of bipartite graph links}

Let $\Gamma \subset \theta_{p,q}$ be an embedded bipartite graph with $p+q$ vertices. We define the density of $\Gamma$ as
$$d(\Gamma)=\frac{e(\Gamma)}{pq},$$
where $e(\Gamma)$ is the number of edges of $\Gamma$. This carries over to bipartite graph links $L$, by taking the supremum over all bipartite graph representatives for $L$:
$$d(L)=\sup_{L=L(\Gamma)} d(\Gamma).$$

\begin{proposition} \quad
\begin{enumerate}
\item[(1)] The supremum is actually a maximum, i.e. there exists an embedded bipartite graph $\Gamma$ with $d(\Gamma)=d(L)$,\\
\item[(2)] $d(K) \leq \frac{2}{b}-\frac{\chi}{b^2}$, for all bipartite graph knots $K$, where $b$ and $\chi$ denote the braid index and the maximal Euler characteristic of the knot $K$, respectively. \\
\end{enumerate}
\end{proposition}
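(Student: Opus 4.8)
The plan is to treat the two parts separately, since they rely on different kinds of input.

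For part (1), I would argue that the density is controlled by a bounded combinatorial quantity, so the supremum is attained. The key observation is that a bipartite graph representative $\Gamma \subset \theta_{p,q}$ of a fixed link $L$ cannot have $p$ or $q$ arbitrarily large once we insist on incompressibility: by Theorem~1, the ribbon surface of $\Gamma$ is a quasipositive surface with boundary $L$, and its first Betti number equals $e(\Gamma) - (p+q) + (\text{number of components of }\Gamma)$. Since this surface is an incompressible subsurface of the fibre surface of $T(p,q)$, its Euler characteristic is bounded below by $\chi(L)$ (the maximal Euler characteristic), which forces $e(\Gamma) \leq p + q - 1 + b_0(\Gamma) + (1 - \chi(L)) \cdot(\text{something bounded})$; more to the point, after deleting isolated edges and contracting valence-two vertices as in the proof of Theorem~1, one reduces to a canonical bipartite graph in which the number of vertices on the lower line is the number of bands, hence at most $1-\chi(L)$, and the number of vertices on $U$ is at most the braid index. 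So only finitely many combinatorial types $\Gamma$ (up to the relevant moves) occur among representatives of $L$, and $d(\Gamma) = e(\Gamma)/pq$ takes only finitely many values; the supremum is a maximum.

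For part (2), I would use a double-counting / averaging argument on the maximizing graph $\Gamma$ from part~(1). Write $p$ for the number of vertices of $\Gamma$ on $U$ and $q$ for the number on $L$; deforming $\Gamma$ into a strongly quasipositive braid as in Section~2 and then reducing to the canonical form shows $p \geq b$ (the braid index is at most the number of strands, which is the number of vertices on $U$ after the reduction — or one argues $p$ is a valid braid index) and relates $e(\Gamma)$, $p$, $q$ to $\chi$. Precisely, the canonical Seifert surface built from $\Gamma$ has $\chi(\Gamma) = p - (e(\Gamma) - q)$ if every lower vertex has valence at least two, i.e.\ $e(\Gamma) = p + q - \chi$. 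Then
\[
d(\Gamma) = \frac{e(\Gamma)}{pq} = \frac{p + q - \chi}{pq} = \frac{1}{q} + \frac{1}{p} - \frac{\chi}{pq}.
\]
Treating the right-hand side as a function of $p$ and $q$ with $p \geq b$, I would show it is maximized (subject to the geometric constraints) at $p = q = b$, giving $d(\Gamma) \leq \tfrac{2}{b} - \tfrac{\chi}{b^2}$. This requires knowing that $q$, like $p$, is at least $b$ — which follows by the symmetry of the bipartite model (turning $\Gamma$ upside down exchanges the roles of $U$ and $L$, and the braid index is a link invariant), together with monotonicity of $f(p,q) = 1/p + 1/q - \chi/(pq)$ in each variable for $\chi \leq 1$.

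The main obstacle I anticipate is pinning down precisely why both $p$ and $q$ can be taken $\geq b$ in the \emph{maximizing} representative, and handling the degenerate lower vertices of valence one (which, as noted in Section~2, contribute nothing and can be removed) so that the clean identity $e(\Gamma) = p + q - \chi$ holds. One must check that removing such edges does not decrease the density — which is clear since it decreases both $e(\Gamma)$ and, after also deleting the now-isolated $U$-vertex, $p$, and a short estimate shows the ratio does not drop — and that after this cleanup the number of $U$-vertices is a legitimate braid index for $L$, so it is $\geq b$; the same for $q$ by the upside-down symmetry. Once these bookkeeping points are settled, the inequality is just the elementary optimization of $f(p,q)$ above.
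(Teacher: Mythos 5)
Your argument for part~(2) is essentially the paper's: the Euler characteristic of the ribbon surface $F(\Gamma)$ is $p+q-e(\Gamma)$ (the surface retracts onto the graph), and Bennequin's inequality forces $\chi(F(\Gamma))=\chi(L)$, giving $d(\Gamma)=\tfrac{1}{p}+\tfrac{1}{q}-\tfrac{\chi}{pq}$; then $p,q\geq b$ and monotonicity finish it. One small caveat: you restrict the identity $e(\Gamma)=p+q-\chi$ to the case where every lower vertex has valence at least two, but the Bennequin argument does not need this hypothesis --- it holds for \emph{every} bipartite graph representative of $L$ (discarding isolated vertices), and that unconditional validity is exactly what the paper exploits.

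For part~(1) you take a genuinely different route, and it has real gaps. You try to bound the complexity of a representative by reducing $\Gamma$ to a canonical form as in the proof of Theorem~1 and then claiming the number of $U$-vertices is ``at most the braid index.'' This is backwards: the braid index is a \emph{lower} bound on the number of strands, so $p\geq b$, not $p\leq b$; canonical representatives can have arbitrarily many strands (stabilizations), so finiteness of canonical types is false as stated. Your reduction steps also need more care: deleting a valence-one lower vertex does not decrease density (your estimate is right), but it is not established that every near-maximizing sequence can be reduced to one of finitely many canonical graphs without ever dropping the density. The paper avoids all of this by reusing the very formula you derived for part~(2): since $d(\Gamma)=\tfrac{1}{p}+\tfrac{1}{q}-\tfrac{\chi(L)}{pq}$ holds for all representatives, one chooses $N$ with $N\geq|\chi(L)|$ and $N>\tfrac{3}{d(L)}$, observes that any $\Gamma$ with both $p,q\geq N$ has $d(\Gamma)\leq \tfrac{3}{N}<d(L)$, and (using that the formula is decreasing in each variable) concludes the supremum is attained among the finitely many graphs with $p,q\leq N$. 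In short: the missing idea in your part~(1) is that Bennequin pins down the density as an explicit function of $(p,q)$ alone, and that this function vanishes at infinity; once you have that, no normal-form argument is needed.
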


\begin{corollary}
$d(L)=1 \Leftrightarrow L$ is a torus link.
\end{corollary}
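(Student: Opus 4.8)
The plan is to deduce the corollary almost immediately from part~(1) of Proposition~1, together with the elementary observation that the density of a bipartite graph link never exceeds~$1$. First I would record that observation: if $\Gamma\subset\theta_{p,q}$ is an embedded bipartite graph on the full vertex set of $\theta_{p,q}$, then its edges are pairwise distinct straight segments, each joining one of the $p$ vertices on $U$ to one of the $q$ vertices on $L$, so $e(\Gamma)\le pq$ and hence $d(\Gamma)\le 1$, with equality precisely when $\Gamma=\theta_{p,q}$. Taking the supremum over all representatives gives $d(L)\le 1$ for every bipartite graph link~$L$.

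The implication ``$L$ a torus link $\Rightarrow d(L)=1$'' now follows at once: if $L=T(p,q)$ then $L=L(\theta_{p,q})$ by the identification of complete bipartite graph links with torus links established in Section~2, and $\theta_{p,q}$ is itself a representative of $L$, of density $e(\theta_{p,q})/pq=1$; combined with $d(L)\le 1$ this forces $d(L)=1$.

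For the converse I would invoke Proposition~1(1): if $d(L)=1$, the supremum defining $d(L)$ is attained, so there exists an embedded bipartite graph $\Gamma\subset\theta_{p,q}$ with $d(\Gamma)=1$, that is, $e(\Gamma)=pq=e(\theta_{p,q})$. By the equality case above this forces $\Gamma=\theta_{p,q}$, whence $L=L(\Gamma)=T(p,q)$ is a torus link.

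The step I expect to carry all the weight is Proposition~1(1): without the attainment statement one would still have to exclude non-torus bipartite graph links whose representatives have density strictly below~$1$ yet accumulating at~$1$, and the bound of Proposition~1(2) is not sharp enough to do this by itself --- e.g.\ the granny knot $T(2,3)\#T(2,3)$ has braid index $b=3$ and maximal Euler characteristic $\chi=-3$, so (2) only yields $d\le\tfrac{2}{3}+\tfrac{1}{3}=1$, not a strict inequality. Everything else in the argument is just counting edges and quoting the Section~2 dictionary between complete bipartite graphs and torus links, so I would not expect a genuine obstacle there.
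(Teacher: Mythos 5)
Your proposal is correct and takes the approach the paper clearly intends (the paper states the Corollary without proof, but positions it as a direct consequence of Proposition~1): establish $d(\Gamma)\le 1$ with equality iff $\Gamma=\theta_{p,q}$, use the identification $L(\theta_{p,q})=T(p,q)$ from Section~2 for the forward direction, and invoke Proposition~1(1) for the converse to upgrade $d(L)=1$ to the existence of a representative of density exactly~$1$. Your closing remark correctly identifies that part~(1), not part~(2), is the load-bearing ingredient.
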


\begin{proof}[Proof of Proposition~1] 
Let $\Gamma$ be an embedded bipartite graph representative for $L$. The corresponding ribbon surface $F=F(\Gamma)$ is a Seifert surface of maximal Euler characteristic for the link $L$, by Bennequin's inequality~\cite{Be}: $\chi(F)=\chi(L)$. In terms of $\Gamma$, we compute
$$\chi(F)=p+q-e(\Gamma)=p+q-pqd(\Gamma),$$
thus
$$d(\Gamma)=\frac{p+q-\chi(L)}{pq}.$$
This implies the second item of the Proposition, since $p,q \geq b$ and $\chi$ is non-positive for knots (except for the trivial knot, for which the
statement is true anyway). Choose $N \in \N$ so that $N \geq |\chi(L)|$ and $N>\frac{3}{d(L)}$. If $p,q \geq N$ then
$$d(\Gamma)=\frac{1}{q}+\frac{1}{p}+\frac{-\chi(L)}{pq} \leq \frac{3}{N}<d(L).$$
Therefore, in order to come close to $d(L)$, the graph $\Gamma$ must satisfy $p,q \leq N$. This implies the first item, since there are only finitely many
embedded bipartite graphs with $p,q \leq N$.
\end{proof}

A link is called special alternating, if it has a diagram which is both positive and alternating~\cite{Mu}. Since positive links are quasipositive~\cite{Ru2}, it makes sense to talk about their density.

\begin{proposition} Let $K$ be a special alternating knot of braid index at least $3$.
Then either $K=T(3,4)$ or $d(K) \leq \frac{24}{25}$.
\end{proposition}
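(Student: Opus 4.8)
The plan is to read the density bound off the signature. Since a special alternating knot carries a positive diagram, $K$ is a positive, hence strongly quasipositive (Rudolph~\cite{Ru2}), knot; by Theorem~1 it is a bipartite graph knot, so $d(K)$ is defined, and by Proposition~1 there is an optimal representative $\Gamma\subset\theta_{p,q}$ with
$$d(K)=\frac{e(\Gamma)}{pq}=1-\frac{N}{pq},\qquad N:=pq-e(\Gamma).$$
Deforming $\Gamma$ and the bipartite graph obtained by turning it upside down into canonical surfaces, as in Section~2, exhibits strongly quasipositive braid representatives of $K$ on $p$ and on $q$ strands, so $3\le b\le\min(p,q)$.

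The heart of the argument is a comparison of signature defects $\mathrm{def}(L):=(1-\chi(L))+\sigma(L)$ between $K$ and the torus link $T(p,q)=L(\theta_{p,q})$. First, $F(\theta_{p,q})$ is obtained from $F(\Gamma)$ by attaching one band for each of the $N$ missing edges, so $T(p,q)$ and $K$ are joined by $N$ oriented band moves; the signature changes by at most one under each, hence $|\sigma(T(p,q))-\sigma(K)|\le N$. Second, the Euler characteristic computation in the proof of Proposition~1 gives $1-\chi(K)=(1-\chi(T(p,q)))-N=(p-1)(q-1)-N$. Third, the special alternating hypothesis pins down $\sigma(K)$: it forces the symmetrized Seifert form of $K$ to be definite (Gordon--Litherland), whence $|\sigma(K)|=1-\chi(K)$, and positivity forces $\sigma(K)=\chi(K)-1$; that is, $\mathrm{def}(K)=0$. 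Combining the three,
$$0=\mathrm{def}(K)\ \ge\ \bigl((p-1)(q-1)+\sigma(T(p,q))\bigr)-2N=\mathrm{def}(T(p,q))-2N,$$
so $N\ge\tfrac12\mathrm{def}(T(p,q))$ and therefore $d(K)\le 1-\frac{\mathrm{def}(T(p,q))}{2pq}$.

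It then suffices to show $\mathrm{def}(T(p,q))\ge\tfrac{2}{25}\,pq$ for all $3\le p\le q$ outside a short list. The signature defect of torus links vanishes exactly on the definite (ADE) family $A_n=T(2,n)$, $D_4=T(3,3)$, $E_6=T(3,4)$, $E_8=T(3,5)$, and for every other $(p,q)$ with $p\ge 3$ it is comfortably larger than $\tfrac{2}{25}pq$ --- an asymptotic estimate from the torus-knot signature function together with a finite check. For the exceptional pairs one argues by hand: $\chi(K)$ is odd and at least $\gcd(p,q)-1$ band moves are needed before $L(\theta_{p,q})$ becomes connected, so every \emph{proper} subgraph $\Gamma\subsetneq\theta_{p,q}$ there already satisfies $d(K)<\tfrac{24}{25}$ (e.g.\ $e(\Gamma)\le 7$ in $\theta_{3,3}$, $\le 10$ in $\theta_{3,4}$, $\le 14$ in $\theta_{3,5}$), while $\Gamma=\theta_{p,q}$ forces $K=T(p,q)$ --- and among torus knots of braid index $\ge 3$ this leaves only $T(3,4)$, once $T(3,5)=10_{124}$ is discarded as non-alternating. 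This yields the dichotomy.

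I expect the main obstacle to be the signature input of the middle paragraph: realizing the cobordism between $T(p,q)$ and $K$ honestly as $N$ saddle moves, invoking the correct band-move inequality for links, and matching it against the exact value $\sigma(K)=\chi(K)-1$ with the factor $2$ in the right place. The secondary, arithmetic obstacle is the sharp bound $\mathrm{def}(T(p,q))\ge\tfrac{2}{25}pq$: calibrating the constant to $24/25$ is exactly what makes the handful of small, nearly-definite torus links delicate, and it is from this calibration that the exceptional knot $T(3,4)$ emerges.
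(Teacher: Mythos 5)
The paper and your proposal both exploit that a special alternating knot has \emph{maximal} signature $\sigma(K)=2g(K)$ (the paper gets this from Rasmussen, $s(K)=2g(K)$ for positive plus $s(K)=\sigma(K)$ for alternating; you get it from Gordon--Litherland). But from there the two arguments run in opposite directions. The paper looks \emph{inside} $\Gamma$: a short counting argument shows that $d(\Gamma)>\tfrac{24}{25}$ forces $\Gamma$ to contain a complete $\theta_{3,6}$, hence $F(\Gamma)$ contains the fibre surface of $T(3,6)$ as an incompressible subsurface; since $\sigma(T(3,6))=8<10$ is non-maximal, interlacing of symmetric minors forces $\sigma(K)<2g(K)$, a contradiction. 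Your proposal instead looks \emph{outside} $\Gamma$: viewing $F(\Gamma)\subset F(\theta_{p,q})$ as $N=pq-e(\Gamma)$ band attachments, the same interlacing gives $N\ge\tfrac12\operatorname{def}(T(p,q))$, hence $d(K)\le 1-\tfrac{\operatorname{def}(T(p,q))}{2pq}$, and the proposition is reduced to a universal lower bound $\operatorname{def}(T(p,q))\ge\tfrac{2}{25}pq$ on the signature defect of torus links, plus a check of the exceptional pairs.

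The reduction is a nice idea and the inequality $N\ge\tfrac12\operatorname{def}(T(p,q))$ is correct. But there is a genuine gap: the input $\operatorname{def}(T(p,q))\ge\tfrac{2}{25}pq$ for all $3\le p\le q$ outside a finite list is the entire content of the proposition in your version, and it is asserted rather than proved. It is not a soft fact. The exceptional list is not just the ADE family you name: nearly-definite links such as $T(4,4)$ (where $\operatorname{def}=1<\tfrac{32}{25}$) and $T(3,6)$ (where $\operatorname{def}=2$ just barely clears $\tfrac{36}{25}$) sit right at the threshold, so calibrating the constant to $\tfrac{2}{25}$ requires actual torus-link signature values in a finite but nontrivial range, together with an asymptotic argument to cut the range down. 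Your ``argue by hand'' paragraph for the small cases also has slips (e.g.\ the upper bounds on $e(\Gamma)$ need the knot-parity constraint $e(\Gamma)\equiv p+q-1\pmod 2$, and $T(4,4)$ should appear). By contrast, the paper's route needs only a single computed value, $\sigma(T(3,6))=8$, which is why it avoids ever needing a quantitative statement about $\operatorname{def}(T(p,q))$ in general. If you want to salvage your route, you must actually prove the defect bound; as written it is the missing lemma.
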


The restriction to knots of braid index at least $3$ is only for the sake of simplicity of the statement;
special alternating knots of braid index $2$ are torus links of type $T(2,n)$, $n \geq 3$.

\begin{proof}[Proof of Proposition~2]
Let $K=L(\Gamma)$ be represented by an embedded bipartite graph $\gamma \subset \theta_{p,q}$. If $K \neq T(3,4)$ then $d(K) \neq 1$, since the only alternating torus knots are $T(2,n)$ , $n \in \N$, and $T(3,4)$. Suppose, for contradiction, that $\frac{24}{25}<d(K)<1$. Then the graph $\Gamma$ has more than $25$ edges (in particular, one of the numbers $p,q$ is at least $6$). Moreover, $\Gamma$ contains a complete bipartite graph of type $\theta_{3,6}$ as a subgraph. Indeed, if no $(3,6)$-bipartite subgraph of $\Gamma$ was complete, then $d(\Gamma) \leq \frac{17}{18}<\frac{24}{25}$. In terms of Seifert surfaces, this means that the ribbon surface $F(\Gamma)$ associated with $\Gamma$ contains the fibre surface of the torus link $T(3,6)$ as an incompressible subsurface. A direct computation, carried out in~\cite{GLM}, shows that the symmetrised Seifert matrix of the fibre surface of $T(3,6)$ has non-maximal signature ($\sigma(T(3,6))=8<10$). This fact descends to the surface $F(\Gamma)$, since maximality of the signature is preserved under taking minors of symmetric matrices. We conclude
$$\sigma(K)<2g(K),$$
where $g(K)$ denotes the genus of the knot $K$. On the other hand, since $K$ is positive, its Rasmussen invariant $s(K)$ coincides with twice the genus: $s(K)=2g(K)$. But $K$ is also alternating, so $\sigma(K)=s(K)$, a contradiction (see~\cite{Ra} for the last two facts).
\end{proof}

A careful generalisation of the above proof shows that a large density implies $\frac{\sigma}{2g} \leq \frac{3}{4}$. 
The reader might suspect there is a kind of converse, i.e. a small density implies $\frac{\sigma}{2g} \approx 1$. However, this is not true, as shows the following example.

\begin{example}
Let $n \geq 5$ and let $\Gamma_n \subset \theta_{n,n+1}$ be the embedded bipartite subgraph obtained from the complete bipartite graph $\theta_{4,5}$ by the extension shown in Figure~10, for $n=9$. We observe that $K_n=L(\Gamma_n)$ is
a knot of genus $g(K_n)=g(T(4,5))=6$. A direct computation yields $\sigma(K_n) \leq 10$, thus $\frac{\sigma_{K_n}}{2g(K_n)} \leq \frac{5}{6}$ (this is due to the fact that the torus link $T(4,4)$ has non-maximal signature). Moreover, the braid index of $K_n$ grows linearly in $n$, similarly as for twist knots with increasing twist
number. This follows for example from the inequality of Morton and Franks-Williams~\cite{Mo},~\cite{FW}. The inequality of Proposition~1 implies
$$\lim_{n \to \infty} d(K_n)=0.$$

Therefore a small density does not necessarily imply $\frac{\sigma}{2g} \approx 1$.

\begin{figure}[ht]
\scalebox{1.0}{\raisebox{-0pt}{$\vcenter{\hbox{\epsffile{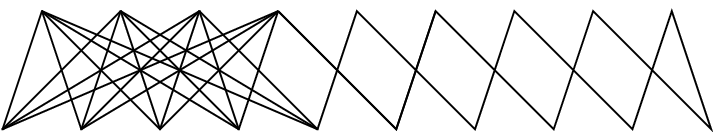}}}$}} 
\caption{}
\end{figure}

\end{example}

The proof of Proposition~2 suggests to study bipartite graph links via graph minor theory. For example, we may ask if there exist finitely many embedded bipartite graphs $\Gamma_1, \ldots, \Gamma_n \subset \R^3$ such that the following holds: the signature of a bipartite graph knot $K=L(\Gamma)$ is maximal, $\sigma(K)=2g(K)$, if and only if $\Gamma$ contains none of the graphs $\Gamma_i$ as embedded minors.

\bigskip
\noindent
Universit\"at Bern, Sidlerstrasse 5, CH-3012 Bern, Switzerland

\bigskip
\noindent
\texttt{sebastian.baader@math.unibe.ch}

\end{document}